\def\timestamp{%
Time-stamp: <copies-of-N.tex: dinsdag 31-10-2023 at 17:13:31 (cet)>}
\def\stripname Time-stamp: <#1: #2 #3 at #4 #5>{#3/#4 (#1)}
\edef\filedate{\expandafter\stripname\timestamp}
\DeclareMathSymbol\A \mathord{AMSb}{`A}
\DeclareMathSymbol\BB \mathord{AMSb}{`B}
\DeclareMathSymbol\D \mathord{AMSb}{`D}
\DeclareMathSymbol\E \mathord{AMSb}{`E}
\DeclareMathSymbol\N \mathord{AMSb}{`N}
\DeclareMathSymbol\R \mathord{AMSb}{`R}
\newcommand\calA{\mathcal{A}}
\newcommand\calF{\mathcal{F}}
\newcommand\calI{\mathcal{I}}
\newcommand\calR{\mathcal{R}}
\newcommand\calS{\mathcal{S}}
\newcommand\calT{\mathcal{T}}
\newcommand\calZ{\mathcal{Z}}
\newcommand\cee{\mathfrak{c}}
\newcommand\dee{\mathfrak{d}}
\newcommand\pee{\mathfrak{p}}
\newcommand\arr{\mathfrak{r}}
\newcommand\uff{\mathfrak{u}}
\let\axiom\mathsf
\newcommand\CH{\axiom{CH}}
\newcommand\notCH{\neg\axiom{CH}}
\newcommand\MA{\axiom{MA}}
\newcommand\MAnegCH{\MA+\neg\CH}
\newcommand\cl{\operatorname{cl}}
\newcommand\Int{\operatorname{int}}
\newcommand\orpr[2]{\langle{#1},{#2}\rangle}
\newcommand\preim{^\gets}
\newcommand\omegaseq[2][n]{\langle{#2}_{#1}:{#1}\in\omega\rangle}
\newcommand\omegaoneseq[2][\alpha]{\langle{#2}_{#1}:{#1}\in\omega_1\rangle}
\DeclareMathSymbol\restr\mathbin{AMSa}{"16}
\DeclareMathSymbol\le    \mathrel{AMSa}{"36}
\DeclareMathSymbol\ge    \mathrel{AMSa}{"3E}
\mathchardef\mathhyphen "002D   
\newcommand\ulim[1][u]{#1\mathhyphen\!\lim}
\newcommand\Rwone{\R^{\omega_1}}
\newcommand\finseq{{\vphantom\omega}^{<\omega_1}\omega}
\theoremstyle{plain}
\newtheorem{theorem}{Theorem}[section]
\newtheorem{proposition}[theorem]{Proposition}
\theoremstyle{definition}
\newtheorem*{assumption}{Assumption}
\theoremstyle{remark}
\newtheorem{question}{Question}
\newtheorem{remark}[theorem]{Remark}
\begin{document}

\title{Closed copies of $\N$ in $\Rwone$}

\author[A. Dow]{Alan Dow}
\address{Department of Mathematics\\
         UNC-Charlotte\\
         9201 University City Blvd. \\
         Charlotte, NC 28223-0001}
\email{adow@charlotte.edu}
\urladdr{https://webpages.uncc.edu/adow}

\author[K. P. Hart]{Klaas Pieter Hart}
\address{Faculty EEMCS\\TU Delft\\
         Postbus 5031\\2600~GA {} Delft\\the Netherlands}
\email{k.p.hart@tudelft.nl}
\urladdr{https://fa.ewi.tudelft.nl/\~{}hart}

\author[J. van Mill]{Jan van Mill}
\address{KdV Institute for Mathematics\\
         University of Amsterdam\\
         P.O. Box 94248\\
         1090~GE {} Amsterdam\\
         The Netherlands}
\email{j.vanmill@uva.nl}
\urladdr{https://staff.fnwi.uva.nl/j.vanmill/}

\author[J. Vermeer]{Hans Vermeer}
\address{Faculty EEMCS\\TU Delft\\
         Postbus 5031\\2600~GA {} Delft\\the Netherlands}
\email{j.vermeer@tudelft.nl}

\begin{abstract}
We investigate closed copies of~$\N$ in powers of~$\R$ with
respect to $C^*$- and $C$-embedding.  
We show that $\R^{\omega_1}$ contains closed copies of~$\N$ that are not
$C^*$-embedded.
\end{abstract}

\subjclass{Primary 54C45; 
           Secondary: 03E17, 03E50, 03E55, 54D35, 54D40, 54D60, 54G20}
\keywords{closed copy of~$\N$, $C$-embedding, $C^*$-embedding, Aronszajn tree,
          Aronszajn line, compactification, realcompactness, 
          powers of~$\R$}

\dedicatory{To Istv\'an Juh\'asz on his 80th birthday}

\date\filedate
\maketitle

\section*{Introduction}

In~\cite{dhvmv2022} we presented examples of realcompact spaces that
have closed subsets that are $C^*$-embedded but not $C$-embedded.

One of these spaces, call it $X$, even contains a closed copy of~$\N$ 
(the discrete space of natural numbers) 
that is $C^*$-embedded but not $C$-embedded.
It is well known that the diagonal map from~$X$ into~$\R^{C(X)}$ embeds $X$ 
as a closed $C$-embedded subspace.
The closed copy of~$\N$ in~$X$ then becomes a closed copy of~$\N$ in~$\R^{C(X)}$
that is $C^*$-embedded but not $C$-embedded. 

An intermediate realcompact space, $Y$ say, contains a closed copy of~$\N$ 
that is not $C^*$-embedded and, as above, this yields a closed copy of~$\N$
in~$\R^{C(Y)}$ that is not $C^*$-embedded.

For both spaces the cardinality of the set of continuous functions is equal 
to~$\cee$, which yields the interesting result that one can find closed
copies of~$\N$ in~$\R^\cee$, that are not $C^*$-embedded, and that are
$C^*$-embedded but not $C$-embedded.

In the first version of~\cite{dhvmv2022} we posed two questions suggested 
by these results.
We repeat them here.

\begin{question}\label{question.1}
What is the minimum cardinal~$\kappa$ such that $\R^\kappa$ contains
a closed copy of~$\N$ that is $C^*$-embedded but not $C$-embedded?  
\end{question}

\begin{question}\label{question.2}
What is the minimum cardinal~$\kappa$ such that $\R^\kappa$ contains
a closed copy of~$\N$ that is not $C^*$-embedded?  
\end{question}

Given that $\R^{\omega_0}$ is metrizable and we know that in both cases
we have $\aleph_0<\kappa\le\cee$.

After we posted the first version of the present paper on \texttt{arxiv.org}
Roman Pol kindly drew our attention to three papers,
\cite{MR3003330}, \cite{MR3239211}, and~\cite{MR4138431},
containing results that address the two questions above.

These are:
\begin{enumerate}
\item The main result, Theorem~10, of~\cite{MR3003330} implies that there
      are many closed copies of~$\N$ in~$\R^{\omega_1}$ that are not 
      $C^*$-embedded.
\item The paper~\cite{MR3239211} contains another example, Example~1.1,
      of a closed copy of~$\N$ in~$\R^{\omega_1}$ that is not $C^*$-embedded,
      and an example of a closed copy of~$\N$ in~$\R^\cee$ that
      is $C^*$-embedded but not $C$-embedded.
\item In~\cite{MR4138431} one finds a result, Theorem~3.1, that implies that
      under the assumption of the inequality $\arr>\aleph_1$ every
      $C^*$-embedded subset of $\R^{\omega_1}$ is $C$-embedded.
\end{enumerate}

Thus, Question~2 was answered before we posed it and the answer to Question~1
depends on one's assumptions: the Continuum Hypothesis implies the minimum
is~$\aleph_1$, and it is also consistent that it is larger than~$\aleph_1$.

The result from~\cite{MR4138431} can be viewed as a local version of
the main result of~\cite{MR2823691}: in a model obtained by adding supercompact 
many Random reals to a model of~$\CH$ every $C^*$-embedded subspace of every 
space of character less than~$\cee$ is $C$-embedded.
Indeed, one can create a model of $\arr>\aleph_1$ by  adding $\aleph_2$ or 
more Random reals to a model of~$\CH$.

In retrospect our paper~\cite{dhvmv2022} should have contained references
to~\cites{MR3003330,MR3239211,MR4138431} and we regret not finding these
references ourselves.
Nevertheless the methods and results of~\cite{dhvmv2022} and the present paper
are sufficiently different from the earlier ones that we feel they merit
publication.

In Sections~\ref{sec.three.copies} and~\ref{sec.connection} we give new 
examples and obtain topological and combinatorial translations of the statement
``$\R^{\omega_1}$ contains a closed copy of~$\N$ that is not $C^*$-embedded''
that suggest further interesting questions.

In Section~\ref{sec.three.copies} we present three constructions
of closed copies of~$\N$ that are not $C^*$-embedded in~$\R^{\omega_1}$:
one directly from an Aronszajn tree,
one directly from an Aronszajn continuum, and
one as the path space of an Aronszajn tree.
We decided to give all three examples because they show how versatile
these objects are.

In Section~\ref{sec.connection} we give the translations mentioned above
and give a fourth example that is of a somewhat different nature.

Section~\ref{sec.pseudo-A} deals with a class of topological spaces that
feature in the translations, and in Section~\ref{sec.Souslin} we present
models where $\CH$ fails but where the answer to Question~\ref{question.1}
is still~$\aleph_1$.


\section{Preliminaries}

By now the reader may have guessed that by ``a closed copy of~$\N$'' in some
space~$X$ we mean a closed subspace of~$X$ that is homeomorphic to the discrete
space~$\N$, in other words: a countably infinite closed and discrete subspace.

\medskip
In general we say that a subspace~$Y$ of a space~$X$ is $C$-embedded
if every continuous function $f:Y\to\R$ has a continuous extension to
all of~$X$.
If this holds for all \emph{bounded} continuous functions then we say that 
$Y$~is $C^*$-embedded in~$X$.

The way we shall show that a closed copy of~$\N$ is not $C^*$-embedded in~$X$
is by exhibiting disjoint subsets $A$ and $B$ of~$\N$ that are not
\emph{completely separated}, which means that whenever $g:X\to\R$ is 
bounded and continuous the closures of~$g[A]$ and~$g[B]$ intersect.
This then implies that the characteristic function of~$A$ has no continuous 
extension to~$X$.

\medskip
As mentioned in the introduction we shall uses Aronszajn trees and continua
in some of our constructions; Todor\v{c}evi\'c's article~\cite{MR776625} 
contains all the information that we need.

\medbreak
Below we shall be using a few `small' cardinals from~\cite{MR776622}.
These are $\pee$, $\dee$, and~$\uff$.

To define $\pee$ we first say that a family $\calA$ of subsets of~$\N$
has the \emph{strong finite intersection property} if for every finite
subfamily~$\calF$ of~$\calA$ the intersection~$\bigcap\calF$ is infinite. 
Next we call $P$ a \emph{pseudointersection} of~$\calA$ if $P$~is infinite
and $P\subseteq^* A$ for all~$A\in\calA$.
Then $\pee$~is the minimum cardinality of a family of subsets of~$\N$ with the
strong finite intersection property but without a pseudointersection.

The cardinal~$\dee$ is the minimum cardinality of a subset $D$ of $\N^\N$
with the property that for for every $g\in\N^\N$ there is an~$f\in D$
such that $g(n)\le f(n)$ for all~$n$.

The cardinal~$\uff$ is defined to be the minimum character of a free 
ultrafilter on~$\N$.

The cardinal~$\arr$ mentioned in the introduction is the minimum cardinality
of a family of subsets~$\calR$ of~$\N$ that behaves like an ultrafilter but 
for the finite intersection property: for every subset~$X$ of~$\N$ there is 
a member~$R$ of~$\calR$ such that $R\subseteq^*X$ or $R\cap X=^*\emptyset$.

It is relatively easy to show that $\pee$~is uncountable and that all three
cardinals are not larger than~$\cee$.
As a base for an ultrafilter is a defining family for both~$\pee$ and~$\arr$
we obtain $\pee,\arr\le\uff$, 
and \cite{MR776622}*{Theorem~3.1} shows $\pee\le\dee$.

\smallbreak
More information on these cardinal numbers can be found in~\cite{MR2768685}.

\medbreak
Any potentially unfamiliar topological notions will be defined when needed;
definitions not given here can be found in Engelking's book~\cite{MR1039321}.

\section{Closed copies of $\N$ that are not $C^*$-embedded}
\label{sec.three.copies}

This section contains further examples that show that the answer to the 
Question~\ref{question.2} is~$\aleph_1$.
We give three examples, based on Aronszajn trees and lines, 
of closed copies of~$\N$ that are not $C^*$-embedded in~$\Rwone$.
This may seem like overdoing things somewhat but we think that this 
presentation is more informative.

From our first two constructions we extract a few translations
of ``$\Rwone$ contains a closed copy of~$\N$ that is not $C^*$-embedded''
that allow us to construct a relatively simple third example and an even 
simpler fourth one.

\subsection{A closed copy of $\N$ that is not $C^*$-embedded,
            from an Aronszajn tree}
\label{subsec.Aronszajn.tree}

The first construction uses an Aronszajn tree to guide an embedding
of~$\N$ into~$\Rwone$.

The uncountability of~$\pee$ allows us to define a family
$\{A_t:t\in \finseq\}$ of infinite subsets of~$\N$ such that
\begin{enumerate}
\item if $s\subset t$ then $A_t\subset^* A_s$, and
\item for every $t$ the family $\{A_{t*n}:n\in\omega\}$ is a partition of~$A_s$.
\end{enumerate}

Next we take an Aronszajn subtree $T$ of $\finseq$.
Say an Aronszajn subtree of the set of all finite-to-one members of
$\finseq$, and such that $\{t*n:n\in\omega\}\subseteq T$ whenever $t\in T$.

For every non-zero $\alpha$ in~$\omega_1$ we let 
$\langle t(\alpha,n):n\in\omega\rangle$
enumerate the $\alpha$th level $T_\alpha$ of $T$ in a one-to-one fashion.
We abbreviate $A_{t(\alpha,n)}$ as~$A(\alpha,n)$.

By construction each of the families $\{A(\alpha,n):n\in\omega\}$ is pairwise
almost disjoint.
We can assume, after making finite modifications to the $A(\alpha,n)$,
that every family $\{A(\alpha,n):n\in\omega\}$ is in fact a partition of~$\N$.

\medbreak
We use the partitions to define a map $k\mapsto x_k$ from $\N$ to~$\Rwone$.

\smallbreak
First we set $x_k(0)=2^{-k}$ for all $k$.
This ensures that $X=\{x_k:k\in\N\}$ is a relatively discrete subspace 
of~$\Rwone$.

\smallskip
Second, for every non-zero $\alpha$ in $\omega_1$ we define
$$
x_{2k}(\alpha)=x_{2k+1}(\alpha)=m \text{ iff } k\in A(\alpha,m)
$$
This will ensure that $X$~is closed in~$\Rwone$
and that the sets $\{x_{2k}:k\in\N\}$ and $\{x_{2k+1}:k\in\N\}$ are not 
completely separated in~$\Rwone$.

\smallbreak
To see that $X$~is closed let $x\in\cl X$ and let $u$ be an ultrafilter
such that $x=\ulim x_k$.
We claim $u$ is in fact a fixed ultrafilter and hence that $x\in X$.

Since $u$~is a filter there is for every $\beta$ most one~$n$ such 
that $A(\beta,n)\in u$.
Let $B=\{\orpr\beta n: A(\beta,n)\in u\}$.
If $u$~were free then $A(\beta,n)\cap A(\gamma,m)$ would be infinite whenever
$\orpr\beta n,\orpr\gamma m\in B$.
By the construction of the family 
$\{A_t:t\in \finseq\}$ this would mean
that $\{t(\beta,n):\orpr \beta n\in B\}$ is linearly ordered in~$T$, and 
hence countable.

Take $\alpha$ such that
$T_\alpha\cap\{t(\beta,n):\orpr \beta n\in B\}=\emptyset$,
and let $m=\bigl\lceil x(\alpha)\bigr\rceil$.
Then $U=\N\setminus\bigcup_{i\le m}A(\alpha,i)$ belongs to~$u$,
and $x_{2k}(\alpha)=x_{2k+1}(\alpha)\ge x(\alpha)+1$ for all~$k\in U$.
This shows that $x(\alpha)\neq\ulim x_k(\alpha)$, which contradicts
the assumption that $x=\ulim x_k$.

\smallbreak
To see that $\{x_{2k}:k\in\N\}$ and $\{x_{2k+1}:k\in\N\}$ are not completely
separated in~$\Rwone$ let $g:\Rwone\to[0,1]$ be continuous.
It is well-known, see~\cite{MR1039321}*{Problem~2.7.12}, that
there are $\delta<\omega_1$ and a continuous function $h:\R^\delta\to[0,1]$
such that $g=h\circ\pi_\delta$.

Consider $A(\delta,0)$. 
By construction we know that for every non-zero $\alpha<\delta$
there is a single~$n_\alpha$ such $A(\delta,0)\subset^*A(\alpha,n_\alpha)$.
Let $x\in\R^\delta$ be given by $x(0)=0$ and $x(\alpha)=n_\alpha$,
then the subsequences $\langle x_{2k}:k\in A(\delta,0)\rangle$
and $\langle x_{2k+1}:k\in A(\delta,0)\rangle$ of $\langle x_k:k\in\N\rangle$
both converge to~$x$ and so $h(x)$~is in the closure of both
$\{g(x_{2k}):k\in\N\}$ and~$\{g(x_{2k+1}):k\in\N\}$.

\subsection{Another closed copy of $\N$ that is not $C^*$-embedded,
            from an Aronszajn line}
\label{subsec:another.copy}

Let $L$ be an Aronszajn continuum: 
a first-countable linearly ordered continuum of weight~$\aleph_1$ with 
the property that the closure of every countable set is second-countable,
see~\cite{MR776625}*{Section~5}.
We can also assume, without loss of generality, that $L$~has no non-trivial
separable intervals.

Let $\omegaoneseq x$ enumerate a dense subset of~$L$, where we assume that
$x_0=\min L$ and $x_1=\max L$.
Using the first-countability of~$L$ we find that 
$L=\bigcup_{\alpha<\omega_1}\cl\{x_\beta:\beta\le\alpha\}$,
that is, $L$ is the union of an increasing sequence of second-countable
compact subsets.
Upon thinning out the sequence we obtain a strictly increasing
sequence~$\omegaoneseq K$ of second-countable
compact subsets whose union is equal to~$L$.
The assumption on the intervals of~$L$ implies that each~$K_\alpha$ is 
nowhere dense.

\medbreak
We claim that every $K_\alpha$ is a $G_\delta$-set of~$L$.
By the first-countability of~$L$ this is clear if $\alpha$~is finite,
so we assume below that $\alpha$~is infinite, and hence that 
$\min L$ and $\max L$ belong to~$K_\alpha$.  

\smallbreak
Since $K_\alpha$ is second-countable we can find a countable family~$\calI$ of 
open intervals in~$L$ such that $\{I\cap K_\alpha:I\in\calI\}$ is a base
for the topology of~$K_\alpha$.

Every convex component~$C$ of $L\setminus K_\alpha$ is of the form $(a_C,b_C)$,
with $a_C,b_C\in K_\alpha$.
If $C$ and $D$ are two such components then $b_C<a_D$ or $b_D<a_C$.
For each $C$ take $I_C\in\calI$ such 
that $I_C\cap K_\alpha\subseteq[b_C,\max L]$.
Then $a_C\notin I_C$ and so $b_D\notin I_C$ whenever $b_D<a_C$.
It follows that $I_C\neq I_D$ whenever $C\neq D$.
This shows that there are at most countably many convex components
in the complement of~$K_\alpha$.

\smallbreak
Enumerate these components as $\omegaseq C$ and choose for every $n\in\omega$
sequences $\langle a(n,k):k\in\omega\rangle$ 
and $\langle b(n,k):k\in\omega\rangle$ in~$C_n$ such that
$a(n,k)\downarrow a_{C_n}$ and $b(n,k)\uparrow b_{C_n}$.

Then $C_n=\bigcup_{k\in\omega}[a(n,k),b(n,k)]$ for all~$n$.
Define $F_k=\bigcup_{n\le k}[a(n,k),b(n,k)]$ for all~$k$.
Then $\omegaseq[k]F$ is a sequence of closed sets and its union 
is equal to the complement of~$K_\alpha$.

\medbreak
Since $L$~has weight~$\aleph_1$ there is a compactification~$\gamma\N$
of~$\N$ such that $\gamma\N\setminus\N$ is (homeomorphic to)~$L$,
see~\cite{MR1039321}*{Problem~3.12.18\,(c)}.
Take the quotient of $\gamma\N\times\{0,1\}$ obtained by identifying
$\orpr x0$ and $\orpr x1$ for all~$x\in L$.

The result is a new compactification $\delta\N$ of~$\N$ with remainder
equal to~$L$ and in which $\N$~is the union of two subsets~$A$ and~$B$
such that $L=\cl A\cap\cl B$.

We map $\delta\N$ into $[0,1]^{\omega_1}$ in such a way that the image of~$\N$
will be a closed subset of~$(0,1)^{\omega_1}$ that is not $C^*$-embedded.

For every $\alpha\ge1$ we let $f_\alpha:\delta\N\to[0,1]$ be continuous
such that $K_\alpha=f_\alpha\preim(0)$ and $f_\alpha[\N]\subseteq(0,1)$.
We let $f_0:\delta\N\to[0,1]$ be the continuous map determined 
by $f_0(k)=\frac12+2^{-k-2}$;
it maps $L$ to~$\{\frac12\}$ and $\N$ into~$(\frac12,1)$.

The diagonal map $F$ of $\omegaoneseq f$ maps $\delta\N$ to~$[0,1]^{\omega_1}$
and maps $\N$ into~$(0,1)^{\omega_1}$.

The first coordinate~$f_0$ ensures that $F[\N]$ is relatively discrete 
in~$(0,1)^{\omega_1}$;
it remains to show that it is closed and not $C^*$-embedded.

\smallbreak
To see that $F[\N]$ is closed in~$(0,1)^{\omega_1}$ observe that for 
every $x\in L$ there is an~$\alpha$ such that $x\in K_\alpha$;
but then $f_\beta(x)=0$ for $\beta\ge\alpha$.
It follows that $F[\N]=F[\delta\N]\cap(0,1)^{\omega_1}$.

\smallbreak
To see that $F[\N]$ is not $C^*$-embedded in $(0,1)^{\omega_1}$
let $g:(0,1)^{\omega_1}\to[0,1]$ be continuous. 
We show that the closures of~$g[A]$ and~$g[B]$ intersect.

As above there is an $\alpha$ such that $g$~factors through the first $\alpha$
coordinates, that is, there is a continuous map $h:(0,1)^\alpha\to[0,1]$
such that $g=h\circ\pi_\alpha$.
Take $x\in L\setminus K_\alpha$.
Then $x\in\cl A\cap\cl B$, 
hence $\pi_\alpha(x)\in\cl(\pi_\alpha[A])\cap\cl(\pi_\alpha[B])$.
But because $x\notin K_\beta$ for all $\beta\le\alpha$ we find 
that $\pi_\alpha(x)\in(0,1)^\alpha$ and hence we conclude
that $h(\pi_\alpha(x))\in\cl(g[A])\cap\cl(g[B])$.

\subsection{A characterization}

From the foregoing example we extract a characterization of there being
a closed copy of~$\N$ in~$\Rwone$ that is not $C^*$-embedded.

\begin{theorem}\label{thm.char}
The following three statements are equivalent:
\begin{enumerate}
\item There is closed copy of~$\N$ in~$\Rwone$ that is not
      $C^*$-embedded.
\item There is closed copy of~$\N$ in~$\Rwone$ that is not
      $C$-embedded.
\item There is a compact space~$X$ with a cover consisting of $\aleph_1$~many
      zero-sets that has no countable subcover.
\end{enumerate}
\end{theorem}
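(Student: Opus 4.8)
The plan is to prove the two implications separately, the guiding idea being that the compact space~$X$ in~(2) is a copy of the remainder of the closed copy of~$\N$ in~(1), and that the zero-set cover encodes precisely the fact that a bounded continuous function, depending on only countably many coordinates, cannot separate the two ``halves'' of that copy.

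For $(2)\Rightarrow(1)$ I would imitate Subsection~\ref{subsec:another.copy} almost verbatim. First I would reduce to the case that $X$ has weight at most~$\aleph_1$: writing the given cover as $Z_\alpha=f_\alpha\preim(0)$ with $f_\alpha\colon X\to[0,1]$ and taking the diagonal map of $\langle f_\alpha:\alpha<\omega_1\rangle$ into $[0,1]^{\omega_1}$, the image~$X'$ is compact of weight $\le\aleph_1$, the sets $\{y\in X':y_\alpha=0\}$ still cover~$X'$, and any countable subcover of~$X'$ pulls back to one of~$X$. Next I would realize $X$ as the remainder $\gamma\N\setminus\N$ of a compactification of~$\N$ (Engelking, Problem~3.12.18\,(c), as in the example) and then \emph{double} it, taking the quotient of $\gamma\N\times\{0,1\}$ that identifies $\orpr x0$ with $\orpr x1$ for $x\in X$; this gives a compactification $\delta\N$ with remainder~$X$ in which $\N=A\cup B$ and $\cl A\cap\cl B=X$. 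Since $\N$ is countable, $X=\delta\N\setminus\N$ is a closed $G_\delta$, hence a zero-set of the compact space~$\delta\N$; and a zero-set of a zero-set is again a zero-set, so each $Z_\alpha$ is a zero-set of~$\delta\N$.

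I would therefore choose $f_\alpha\colon\delta\N\to[0,1]$ with $f_\alpha\preim(0)=Z_\alpha$ and, after a harmless rescaling, $f_\alpha[\N]\subseteq(0,1)$ and $f_\alpha[X]\subseteq[0,1)$; add the coordinate $f_0$ with $f_0(k)=\frac12+2^{-k-2}$ to make the copy relatively discrete; and take the diagonal map~$F$ into $[0,1]^{\omega_1}$. Then closedness of $F[\N]$ in $(0,1)^{\omega_1}$ is exactly the cover property (each $x\in X$ lies in some~$Z_\alpha$, so $f_\alpha(x)=0$ and $F(x)\notin(0,1)^{\omega_1}$), and the failure of $C^*$-embedding is exactly the absence of a countable subcover: a bounded continuous~$g$ factors as $h\circ\pi_S$ with $S$ countable, and since $\{Z_\alpha:\alpha\in S\}$ does not cover~$X$ there is $x\in X$ with $\pi_S(F(x))\in(0,1)^S$, whence $h(\pi_S(F(x)))\in\cl g[A]\cap\cl g[B]$ because $x\in\cl A\cap\cl B$. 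For $(1)\Rightarrow(2)$ I would identify $\Rwone$ with $(0,1)^{\omega_1}$, let $\tilde D$ be the closure of~$D$ in the cube $[0,1]^{\omega_1}$, and set $X=\tilde D\setminus D$. As $D$ is closed in $(0,1)^{\omega_1}$ we have $\tilde D\cap(0,1)^{\omega_1}=D$, so $X=\tilde D\cap\bigl([0,1]^{\omega_1}\setminus(0,1)^{\omega_1}\bigr)$ is compact and nonempty (a copy of~$\N$ is not compact). The function $x\mapsto x_\alpha(1-x_\alpha)$ gives the zero-set $Z_\alpha=\{x\in X:x_\alpha\in\{0,1\}\}$, and since every point of~$X$ has some coordinate in~$\{0,1\}$ the family $\{Z_\alpha:\alpha<\omega_1\}$ covers~$X$.

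It remains to show this cover has no countable subcover, and here I would use the routine equivalence that $A,B$ (we may take $B=D\setminus A$) are completely separated in $(0,1)^{\omega_1}$ iff $\pi_S[A],\pi_S[B]$ are completely separated in the \emph{metrizable} space $(0,1)^S$ for some countable~$S$, which, being metric, holds iff they have disjoint closures there. So suppose toward a contradiction that $\{Z_\alpha:\alpha\in S\}$ covers~$X$ for some countable~$S$; enlarging~$S$ (still a cover) I may assume $\pi_S$ is injective on the countable set~$D$, and the covering assumption then reads $\pi_S[\tilde D]\cap(0,1)^S=\pi_S[D]$. Since $A,B$ are not completely separated, $\cl\pi_S[A]\cap\cl\pi_S[B]$ contains a point~$p$, and $p\in(0,1)^S\cap\pi_S[\tilde D]=\pi_S[D]$, so $p=\pi_S(d)$ for a unique $d\in D$, say $d\in A$; taking a sequence in~$B$ whose $\pi_S$-images converge to~$p$ and a subnet convergent in the compact set~$\tilde D$, the limit~$y$ has $\pi_S(y)=p\in(0,1)^S$, hence $y\in D$ and $y=d$, yet $D$ is open and discrete in~$\tilde D$, so $d=y\in\cl B$ forces $d\in B$ --- a contradiction. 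I expect this last fiber argument to be the main obstacle: the delicate points are passing from ``not completely separated'' to the disjoint-closures statement for the countable projections, and arranging $\pi_S$ to be injective on~$D$ so that openness and discreteness of~$D$ in~$\tilde D$ can convert a limit of $B$-points that equals an $A$-point into an outright contradiction. (A covering subfamily of size $<\aleph_1$ would be countable, so the distinct nonempty~$Z_\alpha$ form a cover of cardinality exactly~$\aleph_1$ without countable subcover.)
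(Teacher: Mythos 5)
Your proposal is correct and follows essentially the same route as the paper: for $(2)\Rightarrow(1)$ you reproduce the doubling construction of Subsection~\ref{subsec:another.copy} (reduce to weight~$\aleph_1$, realize $X$ as a remainder of~$\N$, double along the remainder, and use the diagonal of the functions $f_\alpha$ with $f_\alpha\preim(0)=Z_\alpha$), and for $(1)\Rightarrow(2)$ you take the same compactum $K=\cl D\setminus D$ with the same coordinate zero-sets. The only divergence is in finishing $(1)\Rightarrow(2)$: where the paper argues that a countable subcover would make $\pi_\alpha[N]$ closed in the metrizable space $(0,1)^\alpha$ and hence $N$ would be $C$-embedded (via Tietze), you instead fix a non-completely-separated pair $A,B$ and derive a contradiction by a convergent-net argument in $\cl D$; both work, and your care with the injectivity of $\pi_S$ on $D$ and with $D$ being open in $\cl D$ (since an open $U$ with $U\cap D=\{d\}$ gives $U\cap\cl D\subseteq\cl(U\cap D)=\{d\}$) is exactly what is needed.
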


\begin{proof}
That (1) implies~(2) is clear.

To prove (2) implies (3) we take a countable closed and discrete subset~$N$
of~$(0,1)^{\omega_1}$ that is not $C$-embedded.
Let $K=\cl N\setminus N$, where we take the closure in~$[0,1]^{\omega_1}$.
For every $\alpha\in\omega_1$ and $i\in\{0,1\}$ we let 
$A(\alpha,i)=\{x\in K:x_\alpha=i\}$.
Then $\{A(\alpha,i):\orpr\alpha i\in\omega_1\times2\}$ is a cover of~$K$
by $\aleph_1$~many $G_\delta$-sets.
We show that there is no~$\alpha\in\omega_1$ such that 
$\{A(\beta,i):\orpr\beta i\in\alpha\times2\}$ covers~$K$.

Let $\alpha\in\omega_1$; we can assume that the projection 
$\pi_\alpha:[0,1]^{\omega_1}\to[0,1]^\alpha$ is one-to-one on~$N$.
If $\{A(\beta,i):\orpr\beta i\in\alpha\times2\}$ covers~$K$ then for every
$x\in K$ there is a $\beta\in\alpha$ such that $x_\beta\in\{0,1\}$,
and hence $\pi_\alpha(x)\notin(0,1)^\alpha$.
We see that $\pi_\alpha[K]$ is disjoint from~$(0,1)^\alpha$ and hence
that $\pi_\alpha[N]$~is closed in~$(0,1)^\alpha$ and hence also $C$-embedded
because $(0,1)^\alpha$~is metrizable.
But then $N$~would be $C$-embedded in~$(0,1)^{\omega_1}$.  

\medbreak
To prove that (3) implies (1) we proceed as in 
Section~\ref{subsec:another.copy}.
Let $X$ be a space as in~(3) and let $\{A_\alpha:\alpha\in\omega_1\}$
be the cover by~zero-sets without a countable subcover.
We may assume that $X$~has weight~$\aleph_1$, for example, by choosing
a sequence $\omegaoneseq f$ of continuous functions from~$X$ to~$[0,1]$
such that $A_\alpha=f_\alpha\preim(0)$ for all~$\alpha$.
The image~$K$ of~$X$ under the diagonal map of the sequence has the same
property as $X$ itself, where $B_\alpha=\{x\in K:x_\alpha=0\}$ defines
the family of zero-sets.

The construction in Section~\ref{subsec:another.copy} now yields a closed
copy of~$\N$ in~$(0,1)^{\omega_1}$ that is not $C^*$-embedded.     
\end{proof}

\begin{remark}
Of course $2^2=4$ is also an equivalent of statement~(1), as both are true,
but this theorem should be understood as a translation:
to construct the desired embedding it is necessary and sufficient to construct
a particular type of compact topological space.
\end{remark}

\begin{remark}
It is interesting to see that the formally weaker statement~(2) implies
statement~(1); what is hidden in the proof is that from the copy that
is not $C$-embedded one constructs a copy that is not $C^*$-embedded
by taking its closure in~$[0,1]^{\omega_1}$, doubling the resulting 
compactification, then glueing the remainders onto each other and
find a suitable embedding of the resulting space.
\end{remark}

\subsection{Yet another closed copy of $\N$ that is not $C^*$-embedded,
            from an Aronszajn tree}

To see an application of Theorem~\ref{thm.char} we create yet another
closed copy of~$\N$ in $\Rwone$ that is not $C^*$-embedded, by 
exhibiting a space that satisfies the properties in~(3) in the theorem.

We let $T$ be an Aronszajn tree and we take its \emph{path space} $\sigma T$,
where a path is a linearly ordered subset~$P$ that is also an initial segment: 
if $t\in P$ and $s\le t$ then $s\in P$.
We view $\sigma T$, via characteristic functions, as a subspace of the 
Cantor cube~$\{0,1\}^T$.
For more on this construction see~\cite{MR1355064}.

The condition that the paths be linearly ordered ensures that $\sigma T$~is 
closed and hence compact.
The weight of~$\sigma T$ is at most that of~$\{0,1\}^T$, that is $\aleph_1$.

For $\alpha\in\omega_1$ we let $K_\alpha$ be the set of paths that are
of length less than~$\alpha$.
To see that $K_\alpha$ is closed note that $p\in K_\alpha$ 
iff $p\cap T_\alpha=\emptyset$.
That is $K_\alpha=\sigma T\setminus \bigcup_{t\in T_\alpha}O_t$,
where $O_t=\{p:t\in p\}$.
The sets $O_t$ are clopen, so the union $\bigcup_{t\in T_\alpha}O_t$ is an 
open $F_\sigma$-set.

Because $T$ is uncountable no countable subfamily 
of $\{K_\alpha:\alpha\in\omega_1\}$ covers $\sigma T$.

Note that, as every path is countable, the space~$\sigma T$ is actually 
Corson-compact.

\section{The connection with Aronszajn trees and lines}
\label{sec.connection}

Each of the three constructions in the previous section uses an Aronszajn tree
or line as input.
The following theorem, which adds three more statements to the list
in Theorem~\ref{thm.char}, makes precise how these structures enter the
constructions.

\begin{theorem}
The following statements are equivalent. \label{thm.char.N}
\begin{enumerate}
\item There is closed copy of~$\N$ in~$\Rwone$ that is not
      $C^*$-embedded.
\item There is closed copy of~$\N$ in~$\Rwone$ that is not
      $C$-embedded.
\item There is a compact space~$X$ with a cover consisting of $\aleph_1$~many
      zero-sets that has no countable subcover.
\item There is a compact space~$X$ of weight~$\aleph_1$ with a cover consisting
      of $\aleph_1$~many zero-sets that has no countable subcover.
\item The space~$\N^*$ has a cover by $\aleph_1$~many zero-sets that has no 
      countable subcover.
\item There is an $\omega_1\times\omega$-matrix \label{item.matrix}
      $\langle A(\alpha,n):\orpr\alpha n\in\omega_1\times\omega\rangle$
      of infinite subsets of~$\N$ such that
      \begin{enumerate}
      \item for every countable subset $C$ of $\omega_1$ there is a function
            $f:C\to\omega$ such that $\{A(\alpha,f(\alpha)):\alpha\in F\}$
            has the strong finite intersection property, and
      \item there is \emph{no function} $f:\omega_1\to\omega$ such that
            $\{A(\alpha,f(\alpha)):\alpha\in\omega_1\}$ has the strong finite
            intersection property.
      \end{enumerate}
\end{enumerate}
\end{theorem}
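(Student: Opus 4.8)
The plan is to prove the cyclic chain of implications $(1)\Rightarrow(2)\Rightarrow(3)\Rightarrow(4)\Rightarrow(5)\Rightarrow(1)$, since $(1)\Leftrightarrow(2)$ is already Theorem~\ref{thm.char} and several of the remaining steps are essentially routine reductions. First I would dispatch $(2)\Rightarrow(3)$: given a compact $X$ with its cover $\{A_\alpha:\alpha\in\omega_1\}$ by zero-sets with no countable subcover, I pick for each~$\alpha$ a continuous $f_\alpha:X\to[0,1]$ with $A_\alpha=f_\alpha\preim(0)$ and form the diagonal map into~$[0,1]^{\omega_1}$; its image~$K$ is compact of weight at most~$\aleph_1$, and the sets $\{x\in K:x_\alpha=0\}$ are zero-sets that inherit the ``no countable subcover'' property. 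This is exactly the weight-reduction already performed inside the proof of Theorem~\ref{thm.char}, so it costs nothing new.

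\smallbreak
The step $(3)\Rightarrow(4)$ is the one I expect to be the main obstacle, because it is where genuine topology rather than bookkeeping is needed: I must transport an abstract weight-$\aleph_1$ compact space and its zero-set cover onto the fixed space~$\N^*=\beta\N\setminus\N$. The natural route is to realize $X$ as a remainder: since $X$ is compact of weight~$\aleph_1$, there is a compactification~$\gamma\N$ with $\gamma\N\setminus\N$ homeomorphic to~$X$, exactly as invoked in Section~\ref{subsec:another.copy} via \cite{MR1039321}*{Problem~3.12.18\,(c)}. The identity on~$\N$ extends to a continuous surjection $\beta\N\to\gamma\N$ which carries $\N^*$ onto~$X$; pulling back each zero-set~$A_\alpha$ along this map yields a zero-set of~$\N^*$, and since the map is onto no countable subfamily of the pullbacks can cover~$\N^*$ either. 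The delicate point to verify is that preimages of zero-sets under this perfect (indeed continuous, on compacta) map are again zero-sets, and that the covering-with-no-countable-subcover property is genuinely preserved upward through a surjection; both follow because a continuous surjection of compact spaces reflects covers and pulls zero-sets back to zero-sets.

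\smallbreak
For $(4)\Rightarrow(5)$ I would translate the zero-set cover of~$\N^*$ into the combinatorial matrix. A zero-set of~$\N^*$ is the trace on~$\N^*$ of $\cl_{\beta\N}Z$ for a suitable subset~$Z$ of~$\N$, and membership in the $\alpha$th zero-set corresponds to an ultrafilter containing a specified set; refining each zero-set into countably many pieces $A(\alpha,n)$ indexed by~$n\in\omega$ should let me encode ``$u$ meets level~$\alpha$ of the cover'' as ``$A(\alpha,f(\alpha))\in u$'' for some choice function~$f$. Condition~(a) (strong finite intersection property on every countable index set) will then be the assertion that on any countable set of levels the cover can be locally realized by a single ultrafilter—equivalently that the corresponding finite intersections are nonempty in~$\N^*$—while condition~(b) (no global choice function with the SFIP) is precisely the failure of a countable subcover, since a global~$f$ with the SFIP would extend to a free ultrafilter escaping the whole cover. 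The conversion of ``finite intersections nonempty in~$\N^*$'' to ``finite intersections infinite in~$\N$'' is the standard dictionary between traces of closures on~$\N^*$ and the almost-inclusion order on~$\mathcal{P}(\N)$.

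\smallbreak
Finally $(5)\Rightarrow(1)$ retraces the very first construction of Section~\ref{subsec.Aronszajn.tree}: given the matrix, I define $x_{2k}(\alpha)=x_{2k+1}(\alpha)=m$ iff $k\in A(\alpha,m)$ (and $x_k(0)=2^{-k}$), embedding~$\N$ into~$\Rwone$. Condition~(a) furnishes, for the countably many coordinates below any fixed~$\delta$, a choice function witnessing that the two halves of the sequence share a limit point, so the copy is not $C^*$-embedded; condition~(b) guarantees that no free ultrafilter can be a limit, so the image is closed—this is the contrapositive of the linear-ordering argument used in Subsection~\ref{subsec.Aronszajn.tree}, now phrased purely through the SFIP rather than through an Aronszajn tree. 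I expect this last implication to go through almost verbatim once the matrix conditions are in hand, which is why the real work is concentrated in the passage to~$\N^*$ at step $(3)\Rightarrow(4)$.
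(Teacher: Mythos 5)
Most of your chain coincides with the paper's: $(1)\Leftrightarrow(2)$ is Theorem~\ref{thm.char}, your $(2)\Rightarrow(3)$ is the same diagonal-map weight reduction, your $(3)\Rightarrow(4)$ (realizing $X$ as the remainder of a compactification of~$\N$ and pulling the cover back along the induced surjection $\N^*\to X$) is exactly how one produces the continuous surjection the paper invokes, and your $(4)\Rightarrow(5)$ is the paper's dictionary. (Two small slips there: a zero-set of~$\N^*$ is \emph{not} in general of the form $\cl_{\beta\N}Z\cap\N^*$ --- those are the clopen sets; the correct translation writes the \emph{complement} of the zero-set as a countable union $\bigcup_nA(\alpha,n)^*$. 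Also your labels are swapped: condition~(b) says the family \emph{covers}~$\N^*$, while condition~(a) says no countable subfamily covers.)

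The genuine gap is your closing step $(5)\Rightarrow(1)$. The matrix in~(5) is arbitrary: for a fixed~$\alpha$ the sets $A(\alpha,n)$ need not be pairwise disjoint and need not cover~$\N$, so the rule ``$x_{2k}(\alpha)=m$ iff $k\in A(\alpha,m)$'' is neither single-valued nor total, and this cannot be patched by a naive normalization. Concretely, take a matrix satisfying (a) and~(b) all of whose entries are contained in $\N\setminus C$ for a fixed infinite~$C$ (shrink any given matrix into a copy of~$\N$ inside $\N\setminus C$; conditions (a) and~(b) are unaffected). Whatever default value you assign to $x_k(\alpha)$ for $k\in C$, every free ultrafilter containing~$C$ contains no $A(\alpha,n)$, so $\ulim x_k(\alpha)$ exists in every coordinate and the image of~$\N$ acquires a limit point: it is not closed. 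The closedness argument of Section~\ref{subsec.Aronszajn.tree} really uses that each row is a partition of~$\N$, which a general matrix does not provide. The paper avoids this entirely by running the dictionary backwards --- from the matrix define $Z_\alpha=\N^*\setminus\bigcup_nA(\alpha,n)^*$, so $(5)\Rightarrow(4)$, and then $(4)\Rightarrow(2)\Rightarrow(1)$ via the compactification construction of Section~\ref{subsec:another.copy}. Since you have already established $(4)\Rightarrow(5)$, reversing that dictionary is also the cheapest repair of your cycle; the direct construction of a closed copy of~$\N$ from the raw matrix is the one step that does not go through ``almost verbatim.''
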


\begin{proof}
Theorem~\ref{thm.char} established the equivalence of (1), (2), and~(3).
In the proof that (3) implies~(1) we proved implicitly that
(3) implies~(4) and (4) implies~(1).

Clearly (5) implies (3).

To prove that (4) implies (5) we take a continuous map $f$ from~$\N^*$
onto~$X$ and take the preimages of the members of the given cover.
This yields the desired cover of~$\N^*$.

It remains to show that (5) and (6) are equivalent.
This follows from the strong zero-dimensionality of~$\N^*$:
if $Z$~is a zero-set in~$\N^*$ then one can cover $\N^*\setminus Z$ by a 
countable pairwise disjoint family of clopen sets.
This family can be expressed as $\{A_n^*:n\in\omega\}$, where each $A_n$~is 
an infinite subset of~$\N$.

Conversely if $\{A_n:n\in\omega\}$ is a family of infinite subsets of~$\N$
then $\N^*\setminus\bigcup_{n\in\omega}A_n^*$ is a zero-set.

Thus a family $\{Z_\alpha:\alpha\in\omega_1\}$ of zero-sets of~$\N^*$ can be
represented by a matrix 
$\langle A(\alpha,n):\orpr\alpha n\in\omega_1\times\omega\rangle$
of infinite subsets of~$\N$ such that 
$Z_\alpha=\N^*\setminus\bigcup_{n\in\omega}A(\alpha,n)^*$.

Then condition~(a) expresses that no countable subfamily covers $\N^*$,
and condition~(b) expresses that the family does cover~$\N^*$.
\end{proof}

The matrix $\langle A(\alpha,n):\orpr\alpha n\in\omega_1\times\omega\rangle$
of sets from Section~\ref{subsec.Aronszajn.tree}, that resulted from 
enumerating the levels of the Aronszajn tree as 
$\langle t(\alpha,n):n\in\omega\rangle$,
satisfies the conditions in item~\eqref{item.matrix} of 
Theorem~\ref{thm.char.N}.

It would seem natural to call such a matrix an Aronszajn matrix and
a compact space with a cover of cardinality~$\aleph_1$ by closed
$G_\delta$-sets without a countable subcover an Aronszajn compactum.
This usage would conflict with that of Hart and Kunen in~\cite{MR2516236};
and, more importantly, it would not be quite correct, as we show next.

\subsection{A matrix and space that are not derived from an Aronszajn tree}

The three examples constructed in Section~\ref{sec.three.copies} all have in 
common that they have an \emph{increasing} cover of length~$\omega_1$ by 
closed $G_\delta$-sets.

Here we construct a compact space of weight~$\aleph_1$ with an $\aleph_1$-sized
cover by closed $G_\delta$-sets that has no countable subcover, and
that is definitely not increasing.
The space is a variation of Example~7 in~\cite{MR584666}.

To begin we take an injective map $f:\omega_1\to\R$ with the property that
for every~$\alpha$ the image of the interval 
$I_\alpha=\bigl[\omega\cdot\alpha,\omega\cdot(\alpha+1)\bigr)$ under~$f$ is 
dense in~$\R$.
This is easily arranged, for example by taking $\aleph_1$~many cosets
of the subgroup of rationals and mapping each interval~$I_\alpha$ onto one of 
these cosets.

We let $X$ be the set of all subsets of~$\omega_1$ on which $f$~is 
monotonically increasing; we identify $X$, via characteristic functions,
with a subset of~$2^{\omega_1}$ and give it the subspace topology.

The complement of~$X$ is open: if $x\notin X$ then there are two ordinals
$\alpha$ and $\beta$ such that $x_\alpha=x_\beta=1$, $\alpha\in\beta$,
and $f(\beta)<f(\alpha)$.
Then $\{y:y_\alpha=y_\beta=1\}$ is an open set disjoint from~$X$.
It follows that $X$~is compact.

As subsets of $\R$ that are well-ordered by the normal order are countable
the space~$X$ is Corson compact.

It remains to exhibit a cover of~$X$ by closed $G_\delta$-sets that has no
countable subcover.

To this end we let $G_\alpha=\{x\in X:(\forall\beta\in I_\alpha)(x_\beta=0)\}$.
This is a closed $G_\delta$-set; it is the intersection of countably many
basic clopen sets: $G_\alpha=\bigcap_{\beta\in I_\alpha}\{x:x_\beta=0\}$.

To see that $\{G_\alpha:\alpha\in\omega_1\}$ is a cover of~$X$, let $x\in X$.
Then, because $S=\{\beta:x_\beta=1\}$ is countable, there is an~$\alpha$
such that $S\subset\alpha$; then $S\cap I_\alpha=\emptyset$ and 
so $x\in G_\alpha$.

To see that no countable subfamily covers $X$ we let $\delta\in\omega_1$.
We take a subset~$A$ of~$\R$ that is ordered in order-type $\delta+2$ by the
normal order of~$\R$ and so we list~$A$ 
as $\langle a_\alpha:\alpha<\delta+2\rangle$ in increasing order.
Next take a sequence $\langle \gamma_\alpha:\alpha<\delta+2\rangle$ of ordinals
such that $\gamma_\alpha\in I_\alpha$ and $a_\alpha<f(\gamma_\alpha)<a_{\alpha+1}$
for all~$\alpha$.
Then the set $\{\gamma_\alpha:\alpha<\delta+2\}$ 
determines a point in~$X$ that is not in~$\bigcup_{\alpha\in\delta}G_\alpha$.

The same argument enables one to show that the sets $G_\alpha$ are quite
independent: given two disjoint countable sets of ordinals $A$ and $B$
one can find points 
in $\bigcap_{\alpha\in A}G_\alpha\setminus\bigcup_{\beta\in B}G_\beta$.

Via a map from $\N^*$ onto~$X$ we can then create a matrix that is quite 
different from the ones derived from Aronszajn trees.

\section{Pseudo-Aronszajn compacta}
\label{sec.pseudo-A}

Let us, for the nonce, call a compact space a \emph{pseudo-Aronszajn compactum} 
if it has a cover of cardinality~$\aleph_1$ by closed $G_\delta$-sets
that has no countable subcover.
We let $\calA$ denote the class of these compacta. 

It is readily seen that $\calA$ is closed under taking (compact) preimages:
simply pull back the cover.

We have established that every Aronszajn continuum is in $\calA$, and 
hence that a Souslin continuum is a ccc compactum in~$\calA$.

The ordinal space $\omega_1+1$ does not belong to~$\calA$ as every 
$G_\delta$-set that contains the point~$\omega_1$ is co-countable.

Somewhat surprisingly, 
uncountable compact metrizable spaces may or may not all be 
pseudo-Aronszajn compacta.
They all are under~$\CH$ and they all are not under $\MA+\neg\CH$.

\begin{proposition}[$\CH$]\label{eerste}
If $X$ is compact and admits a continuous map $f:X\to\R$ such 
that $f[X]$ is uncountable, then $X\in\calA$.
\end{proposition}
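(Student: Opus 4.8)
The plan is to reduce to the case where $X$ itself is an uncountable compact metric space, and then to build the required cover of closed $G_\delta$-sets directly on the reals, transporting it back by pulling back along $f$. Since $\calA$ is closed under compact preimages, it suffices to show that some uncountable compact metric space lies in~$\calA$; because $f[X]$ is an uncountable compact subset of~$\R$, it contains a copy of the Cantor set, and more to the point it is itself a space onto which $X$ maps continuously, so if I exhibit an uncountable compact metric space $M\subseteq f[X]$ in~$\calA$ with $X$ mapping onto~$M$, I am done. The cleanest route is to produce the cover on a fixed uncountable compact metric space~$K$ (say a copy of the Cantor set sitting inside $f[X]$) and pull back along the composition $X\to f[X]\to K$.

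\smallbreak
So the core task is: under~$\CH$, show that an uncountable compact metric space~$K$ has a cover by $\aleph_1$~many closed $G_\delta$-sets (equivalently, since $K$ is metric, closed sets) with no countable subcover. Here is where~$\CH$ enters. Enumerate the points of~$K$ as $\omegaoneseq x$, using $|K|=\cee=\aleph_1$. The naive idea of covering by singletons fails, because singletons are closed $G_\delta$-sets but then \emph{every} cover of the uncountable~$K$ by singletons trivially has no countable subcover --- and that already finishes the proof! Indeed, in a metric space every singleton is closed and a $G_\delta$, the family $\{\{x\}:x\in K\}$ covers~$K$, it has cardinality $|K|=\cee=\aleph_1$ under~$\CH$, and no countable subfamily covers the uncountable set~$K$. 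Pulling this back along $X\to f[X]\supseteq K$ gives a cover of~$X$; one must only check the pulled-back sets $f\preim(\{x\})$ are closed $G_\delta$-sets (they are, as preimages of closed $G_\delta$-sets under a continuous map) and still number $\aleph_1$ with no countable subcover (no countable subfamily can cover, since the images would cover only countably many points of~$K$).

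\smallbreak
I expect the main obstacle to be bookkeeping rather than depth: I must be careful that the cover pulled back to~$X$ has exactly $\aleph_1$ members and no countable subcover, which requires that $f$ (or the composition onto~$K$) be genuinely onto the uncountable target. Since $f[X]$ is an uncountable compact metric space, take $K=f[X]$ directly; then $g=f\colon X\to K$ is a continuous surjection, the sets $g\preim(\{x\})$ for $x\in K$ are nonempty closed $G_\delta$-sets covering~$X$, their number is $|K|=\cee=\aleph_1$, and a countable subfamily $\{g\preim(\{x\}):x\in C\}$ covers~$X$ only if $C=K$, which is impossible for countable~$C$. One subtlety: distinct fibers are automatically distinct sets, so the cardinality of the \emph{family} equals $|K|=\aleph_1$, as needed. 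Thus the whole proof rests on the single observation that under~$\CH$ an uncountable compact metric space has exactly $\aleph_1$ points, so its fibers under $f$ furnish an $\aleph_1$-sized cover by closed $G_\delta$-sets with no countable subcover, and the closure of~$\calA$ under compact preimages does the rest.
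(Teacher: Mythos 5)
Your final argument is exactly the paper's proof: under $\CH$ the uncountable compact metric space $f[X]$ has cardinality $\aleph_1$, its singletons form a cover by closed $G_\delta$-sets with no countable subcover, and the closure of~$\calA$ under compact preimages (pulling back the fibers $f\preim(x)$, which are zero-sets) finishes it. The initial detour through a Cantor subset of $f[X]$ is unnecessary (and the ``composition $X\to f[X]\to K$'' would need a retraction that you never supply), but you correctly discard it in favour of taking $K=f[X]$ itself.
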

\begin{proof}
The image $f[X]$ is in $\calA$, 
as witnessed by the family of singleton subsets.  
\end{proof}

\begin{proposition}[$\MA+\neg\CH$]\label{tweede}
If $X$ is compact, uncountable and hereditarily Lindel\"of, 
then $X\not\in \calA$.
\end{proposition}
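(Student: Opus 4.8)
The plan is to assume, toward a contradiction, that $X\in\calA$ and to extract from a supposed bad cover a cover of a residual \emph{ccc} compactum by $\aleph_1$ nowhere dense sets, which $\MAnegCH$ forbids. So I would fix a cover $\{F_\alpha:\alpha\in\omega_1\}$ of $X$ by closed $G_\delta$-sets with no countable subcover; only the closedness of the $F_\alpha$ will be used. The first move is to peel off the part of $X$ that is harmlessly covered: let $W$ be the union of all open sets $V$ for which some countable subfamily of $\{F_\alpha\}$ covers $V$. Because $X$ is hereditarily Lindel\"of, the cover of $W$ by such $V$ has a countable subcover, and hence $W$ itself is covered by a single countable subfamily, say $\{F_\beta:\beta\in C\}$. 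Since $\{F_\alpha\}$ has no countable subcover, $K:=X\setminus W$ is a nonempty closed, hence compact, subspace.

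The crux of the argument is that every trace $F_\alpha\cap K$ is nowhere dense in $K$. Indeed, if some open $V\subseteq X$ satisfied $\emptyset\neq V\cap K\subseteq F_\alpha$, then $V=(V\cap K)\cup(V\cap W)$ would be covered by the countable family $\{F_\alpha\}\cup\{F_\beta:\beta\in C\}$; by the maximality built into $W$ this forces $V\subseteq W$, that is $V\cap K=\emptyset$, a contradiction. (The same computation incidentally shows that $K$ has no isolated points, though I will not need this.) Thus $\{F_\alpha\cap K:\alpha\in\omega_1\}$ is a cover of the nonempty compactum $K$ by $\aleph_1$ closed nowhere dense sets.

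Finally I would invoke $\MAnegCH$. As a subspace of the hereditarily Lindel\"of space $X$, the set $K$ is hereditarily Lindel\"of, hence ccc, and being compact Hausdorff it is a nonempty ccc compactum. Since $\aleph_1<\cee$, Martin's Axiom applies at $\aleph_1$, and a nonempty ccc compact Hausdorff space cannot be written as a union of $\aleph_1$ nowhere dense sets. Concretely, in the poset of nonempty open subsets of $K$ the collection $D_\alpha$ of those $U$ whose closure misses $F_\alpha$ is dense, by regularity of $K$ together with the density of the open set $K\setminus F_\alpha$; a filter meeting every $D_\alpha$ then yields, by compactness of $K$, a point lying in $\cl U$ for all $U$ in the filter and therefore in no $F_\alpha$, contradicting that the traces cover $K$. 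This contradiction gives $X\notin\calA$.

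I expect the main obstacle to be exactly the members $F_\alpha$ that have nonempty interior: a naive attempt to force a point into $\bigcap_\alpha(X\setminus F_\alpha)$ fails because the relevant sets need not be dense. The device that overcomes this is the maximal countably coverable open set $W$, which absorbs precisely the ``fat'' behaviour and guarantees that the leftover cover of $K$ is genuinely nowhere dense. A secondary point to watch is that one must check $K$ is still ccc so that $\MA$ is applicable; this is where the full hypothesis of hereditary Lindel\"ofness is used (rather than mere ccc of $X$, which need not pass to the closed subspace $K$).
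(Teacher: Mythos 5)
Your proof is correct. The endgame is the same as in the paper: produce a nonempty residual compactum on which every member of the cover traces a nowhere dense set, observe that hereditary Lindel\"ofness makes this compactum ccc, and contradict $\MAnegCH$ via the standard density argument in the poset of nonempty open sets. Where you differ is in how the residue is obtained. The paper runs a transfinite recursion, at each stage removing the union of the relative interiors of the $Z\cap X_\alpha$ and using hereditary Lindel\"ofness to keep the discarded part countably covered, then stops at the first stage where the increasing sequence of open sets stabilizes (and separately handles the case where the residue is countable). You instead define in one step the maximal open set $W$ that is countably coverable, use hereditary Lindel\"ofness once to see that $W$ itself is countably covered, and get nowhere density of each $F_\alpha\cap K$ directly from the maximality of $W$; this also absorbs the paper's case distinction, since your argument shows $K$ has no isolated points. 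In fact the two constructions yield the same set: each $U_\beta$ of the paper is countably coverable, so the stabilized $U_\alpha$ is contained in your $W$, and a Baire category argument in the residue gives the reverse inclusion. Your version is the more economical of the two, trading the recursion for a single maximality principle, at no loss of generality.
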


\begin{proof}
Let $\calZ$ be a witness of the fact that the uncountable compact 
hereditarily space~$X$ is in~$\calA$. 
We will derive a contradiction. 

Let $X_0 = X$ and  
$U_0 = \bigcup_{Z\in \calZ} \Int_{X_0} Z$. 
There is a countable subfamily~$\calZ_0$ of~$\calZ$ such that 
$U_0 = \bigcup_{Z\in {\calZ_0}} \Int_{X_0} Z$.

Assume that for some $\alpha<\omega_1$, we defined closed sets~$X_\beta$, 
open sets~$U_\beta$, and subfamilies~$\calZ_\beta$ of~$\calZ$,
for all $\beta < \alpha$.

Let $V=\bigcup_{\beta < \alpha} U_\beta$, 
$X_\alpha = (\bigcap_{\beta < \alpha} X_\beta)\setminus V$, 
and $\calS= \bigcup_{\beta < \alpha} \calZ_\beta$. 

Inside $X_\alpha$ let $W = \bigcup_{Z\in \calZ} \Int_{X_\alpha} (Z\cap X_\alpha)$.
Then $U_\alpha = V\cup W$ is open in~$X$, and there is a countable 
subcollection~$\calT$ of~$\calZ$ such that 
$W = \bigcup_{Z\in \calT} \Int_{X_\alpha} (Z\cap X_\alpha)$. 
We let $\calZ_\alpha = \calS\cup \calT$. 

There is a first $\alpha\in\omega_1$ such that $U_\alpha = U_{\alpha + 1}$. 
If $Y=X\setminus U_\alpha$ is countable, then we are clearly done. 
If $Y$ is uncountable, then for every $Z\in \calZ$, 
the intersection $Z\cap Y$ is nowhere dense in~$Y$. 
But this contradicts $\MAnegCH$, for $Y$~is an uncountable compact ccc space 
with a cover by fewer than~$\cee$ many nowhere dense sets.
\end{proof}

One may wonder whether $\MAnegCH$ prevents more compact spaces from being
pseudo-Aronszajn. 
We have seen that a Souslin line is a pseudo-Aronszajn compactum and 
we also know that $\MAnegCH$ implies there are no Souslin lines.
Thus we may conjecture that it implies that there are no pseudo-Aronszajn
compacta that are ccc.

However, as there are pseudo-Aronszajn compacta of weight~$\aleph_1$
one can construct a compactification~$\gamma\N$ of~$\N$ with a 
pseudo-Aronszajn remainder.
That compactification is itself also pseudo-Aronszajn: simply add the isolated
points to the cover of the remainder.
Thus we see that $\calA$~contains separable spaces.

We can strengthen the ccc assumption by making it hereditary; it is well known
that having the hereditary ccc is equivalent to every relatively 
discrete subspace being countable, see~\cite{MR1039321}*{Problem~2.7.9(b)}
for example.
Thus, the hereditary ccc is also a weakening of the hereditary Lindel\"of 
property and a positive answer to the following question would yield a 
strengthening of Proposition~\ref{tweede}.

\begin{question}
Does $\MAnegCH$ imply that uncountable compact hereditarily ccc spaces 
are not pseudo-Aronszajn?
\end{question}

We remark in passing that it is also unknown whether compact hereditarily ccc
spaces are continuous images of~$\N^*$, see~\cite{hvm2022}*{Question~44}.

\section{$\notCH$ and a closed copy of~$\N$ that is $C^*$-embedded but not $C$-embedded}
\label{sec.Souslin}

In section~\ref{sec.three.copies} we used an Aronszajn tree to guide an 
embedding of~$\N$ into~$\R^{\omega_1}$ so as to obtain a closed copy of~$\N$ that
is not $C^*$-embedded.
In this section we use an Aronszajn tree again, this time to create closed 
copies of~$\N$ in~$\R^{\omega_1}$ that are $C^*$-embedded but not $C$-embedded, 
in models where $\CH$ fails.
Thus we see that it is consistent with~$\neg\CH$ that the answer to 
Question~\ref{question.1} be~$\aleph_1$.

The embedding will be much like the one from an arbitrary Aronszajn tree but
with a few changes.
We shall show that the following assumptions suffices to create a closed copy
of~$\N$ in~$\R^{\omega_1}$ that is $C^*$-embedded but not $C$-embedded.

\begin{assumption}
There are an Aronszajn tree~$S$ and a family $\{A_s:s\in S\}$ of infinite
subsets of~$\N$ such that 
\begin{itemize}
\item if $s<t$ then $A_t\subset^* A_s$, and
\item if $Y\subseteq\N$ then there is an ordinal~$\alpha$ in~$\omega_1$
      such that for every $s\in S_\alpha$ either $A_s\subseteq^*Y$
      or $A_s\cap Y=^*\emptyset$.
\end{itemize}
Here $S_\alpha$ denotes the $\alpha$th level of~$S$.
We also assume that every level~$S_\alpha$, except~$S_0$, is infinite
and that every node in~$S$ has infinitely many direct successors.

In addition we make finite modifications to each~$A_s$ so that
$\{A_s:s\in S_\alpha\}$ is a partition of~$\N$.
\end{assumption}

\subsection{The construction}

We shall embed $\N$ into the following product:
$$
\Pi=C\times\prod_{1\le\alpha<\omega_1}S_\alpha
$$
where $C$ is the subspace $\{0\}\cup\{2^{-n}:n\in\N\}$ of~$\R$ and each other
factor~$S_\alpha$ has the discrete topology.
This product is homeomorphic to the product $C\times\N^{\omega_1}$,
which in turn can be embedded as a $C$-embedded subspace into~$\R^{\omega_1}$.

\smallskip
Now we are ready to define the embedding.

To begin we set $x_k(0)=2^{-k}$ for all~$k$; this ensures that the image
will be relatively discrete.

If $\alpha\in[1,\omega_1)$ then we set $x_k(\alpha)=s$ iff $k\in A_s$
(and $s\in S_\alpha$ of course).
 
This defines our copy $N=\{x_k:k\in\N\}$ of~$\N$ in~$\Pi$.

\subsubsection*{$N$ is closed in $\Pi$}

Let $v\in\Pi$.
Then $\langle v_\alpha:1\le\alpha<\omega_1\rangle$ is a sequence in~$S$
with $xv_\alpha\in S_\alpha$ for all~$\alpha$.

As $S$ is Aronszajn there are $\alpha$ and $\beta$ with $\alpha<\beta$ and
such that $v_\alpha$ and $v_\beta$ are incomparable.
Let $w$ be the predecessor of~$v_\beta$ in~$S_\alpha$.
Then $A_w\cap A_{v_\alpha}=\emptyset$ and so, because $A_{v_\beta}\subset^* A_w$
the intersection $A_{v_\beta}\cap A_{v_\alpha}$ is finite.

Let $U$ be the basic neighbourhood 
$\{x\in\Pi:x_\alpha=v_\alpha$ and $x_\beta=v_\beta\}$ of~$v$.
Then $x_k\in U$ iff $k\in A_{v_\beta}\cap A_{v_\alpha}$, hence $U\cap N$ is finite.

We see that $N$ is a locally finite and relatively discrete subset of~$\Pi$,
hence $N$~is closed and discrete.

\subsubsection*{$N$ is $C^*$-embedded in $\Pi$}

Let $Y\subseteq\N$; we show that the sets $\{x_k:k\in Y\}$ 
and $\{x_k:k\notin Y\}$ are completely separated in~$\Pi$.

Let $\alpha$ be such that $A_s\subseteq^*Y$ or $A_s\subseteq^*\N\setminus Y$ 
for all~$s\in S_\alpha$ and divide $S_\alpha$ into two sets:
$I=\{s\in S_\alpha:A_s\subseteq^*Y\}$ and
$J=\{s\in S_\alpha:A_s\cap Y=^*\emptyset\}$.

In this way we create four subsets of~$\N$:
\begin{enumerate}
\item $Y_1=\bigcup\{A_s\cap Y:s\in I\}$,
\item $Y_2=\bigcup\{A_s\cap Y:s\in J\}$,
\item $Z_1=\bigcup\{A_s\setminus Y:s\in J\}$, and
\item $Z_2=\bigcup\{A_s\setminus Y:s\in I\}$.
\end{enumerate}

To begin we observe that $Y_2\cup Z_2$ intersects every $A_s$ in a finite set.
Because $\{A_s:s\in S_\alpha\}$ is a partition of~$\N$ this implies, as in the
proof that $N$~is closed, that 
$D=\{x_k\restr(\alpha+1):k\in Y_2\cup Z_2\}$ is
a closed and discrete subset of the 
subproduct~$\Pi_\alpha=C\times\prod_{1\le\beta\le\alpha}S_\beta$.
This product is separable and metrizable, hence $D$~is $C$-embedded in this 
subproduct, this implies that in particular, 
$\{x_k\restr(\alpha+1):k\in Y_2\}$ and $\{x_k\restr(\alpha+1):k\in Z_2\}$ 
are completely separated in~$\Pi_\alpha$.

Furthermore, because $N$~is relatively discrete in the subproduct the
set $D$ is disjoint from the closure of 
$\{x_k\restr(\alpha+1):k\in Y_1\cup Z_1\}$.

Finally the $\alpha$th coordinates of the $x_k$ and ensure that
$\{x_k(\alpha):k\in Y_1\}$ and $\{x_k(\alpha):k\in Z_1\}$ are disjoint.
And because $S_\alpha$~has the discrete topology this shows that
$\{x_k\restr(\alpha+1):k\in Y_1\}$ and $\{x_k\restr(\alpha+1):k\in Z_1\}$
are completely separated in~$\Pi_\alpha$.

We conclude that 
$\{x_k\restr(\alpha+1):k\in Y\}$ and $\{x_k\restr(\alpha+1):k\notin Y\}$
are completely separated in~$\Pi_\alpha$.

\subsubsection*{$N$ is not $C$-embedded in $\Pi$}

We show that the function $f:N\to\R$ that maps $x_k$ to $k$ has no continuous
extension to~$\Pi$.

Assume $g:\Pi\to\R$ is continuous and such that $g(x_k)=k$ for all~$k$.
As before we can factor~$g$ through a partial product: 
there are a $\delta$ and a continuous function 
$h:C\times\prod_{1\le\alpha<\delta}S_\alpha$
such that $g=h\circ\pi_\delta$.

Let $s\in S_\delta$ and let $s_\alpha$ denote its predecessor in~$S_\alpha$,
for $\alpha\in[1,\delta)$. 
Take such an~$\alpha$, then by construction $A_s\subseteq^*A_{s_\alpha}$ 
and so $x_k(\alpha)=s_\alpha$ for all but finitely many~$k\in A_s$.

Because $A_s$ is infinite this implies that the point~$v$, with $v(0)=0$
and $v(\alpha)=s_\alpha$ for $\alpha\in[1,\delta)$,
is an accumulation point of $\{\pi_\delta(x_k):k\in A_s\}$ and hence
that $h(v)>k$ for all~$k$, a contradiction.

\subsection{A model}

To finish we show that our assumption is actually consistent with the
negation of~$\CH$.
Chapters~VII and~VIII of~\cite{MR597342} provide all the forcing background
that we need.

We let $S$ be an Aronszajn tree as constructed 
in~\cite{MR597342}*{Theorem~II.5.9}.
This tree is a subtree of the subtree~$T$ of $\omega^{<\omega_1}$ that consists
of all finite-to-one sequences of natural numbers
and it has the property that for every $s\in S$ the set of direct
successors is $\{s*n:n\in\omega\}$.
This tree has the advantage that if a partial order preserves~$\omega_1$
then it will add not an $\omega_1$-branch to it, as such a branch would give a 
finite-to-one map from~$\omega_1$ to~$\omega$.

Next we work Exercise~VIII\,(A10) in~\cite{MR597342}, that is, we perform 
an $\omega_1$~long finite support iteration of $\sigma$-centered partial 
create an ultrafilter on~$\N$ of character~$\aleph_1$.

More explicitly: we form a sequence $\langle M_\alpha:\alpha\le\omega_1\rangle$
of models, together with sequences $\omegaoneseq u$ and $\omegaoneseq U$.
Together these satisfy
\begin{enumerate}
\item $u_\alpha$ is an ultrafilter on~$\N$ in $M_\alpha$,
\item $M_{\alpha+1}$ is obtained by forcing over $M_\alpha$ with the partial 
      order~$\E(u_\alpha)$ descibed below, which produces a subset~$U_\alpha$
      of~$\N$ such that $U_\alpha\subseteq^*X$ for all $X\in u_\alpha$, and
\item $u_{\alpha+1}$ extends $u_\alpha\cup\{U_\alpha\}$.
\end{enumerate}
 
For a free ultrafilter~$u$ on~$\N$ we define the partial order
$$
\E(u)=\{\orpr sU: s\in[\N]^{<\omega}, U\in u\}
$$
ordered by $\orpr sU\le\orpr tV$ iff
\begin{itemize}
\item $t\subseteq s$, 
\item $U\subseteq V$, and 
\item $s\setminus t\subseteq V$.
\end{itemize}
If $G$ is a generic filter on~$\E(u)$ then 
$E=\bigcup\{s:(\exists U\in u)(\orpr sU\in G)\}$ is an infinite
subset of~$\omega$ such that $E\subseteq^* U$ for all~$U\in u$.

\subsubsection*{The assumption}

The iteration yields a ccc partial order with a dense subset of 
cardinality~$\cee$.
Therefore it preserves all cardinal arithmetic from the ground model~$M_0$.
Thus $M_{\omega_1}$~can be made to satisfy any consistent cardinal arithmetic,
in particular $2^{\aleph_0}$ can have any value it ought to have.

We define a family $\{A_s:s\in S\}$ of infinite subsets as in our assumption.
We start by setting $A_\emptyset=\N$.

For the successor steps we fix a definable bijection $f:\N^2\to\N$,
say $f(m,n)=\frac12(m+n)(m+n+1)+m$.

Going from $\alpha$ to $\alpha+1$ we assume that $\{A_s:s\in S_\alpha\}$
is in~$M_\alpha$ and build $\{A_t:t\in S_{\alpha+1}\}$ in~$M_{\alpha+1}$.
We take for every~$s\in S_\alpha$ the counting function $c_s:\N\to A_s$;
these functions belong to~$M_\alpha$.
For every $s\in S_\alpha$ and $n\in\N$ we define 
$A_{s*n}=f_s\bigl[f[\{n\}\times U_\alpha]\bigr]$.
In words: we use $f_s\circ f$ to create a partition of~$A_s$ in~$M_\alpha$
and then copy~$U_\alpha$ to each element of that partition by maps 
in~$M_\alpha$.

In this way we ensure that each $A_{s*n}$ has the property that $U_\alpha$ has:
for every subset~$Y$ of~$\N$ that is in~$M_\alpha$ we have 
$A_{s*n}\subseteq Y$ or $A_{s*n}\cap Y=^*\emptyset$.
The resulting family $\{A_t:t\in S_{\alpha+1}\}$ is defined from $U_\alpha$ 
and members of~$M_\alpha$, hence it is in~$M_{\alpha+1}$.

In case $\alpha\in\omega_1$ is a limit the partial family 
$\{A_s:s\in\bigcup_{\beta\in\alpha}S_\beta\}$ belongs to~$M_\alpha$.
So in~$M_\alpha$ we can find a family $\{A_t:t\in S_\alpha\}$ of infinite
subsets of~$\N$ such that $A_t\subseteq^*A_s$ whenever $s<t$.
 
\smallskip
To see that the resulting family has the second property in our assumption
we let~$Y$, in $M_{\omega_1}$, be a subset of~$\N$.
By well-known properties of finite-support iterations of ccc partial orders
there is an~$\alpha\in\omega_1$ such that $Y\in M_\alpha$.
But then for all $s\in S_{\alpha+1}$ we have $A_s\subseteq Y$ 
or $A_s\cap Y=^*\emptyset$.

\begin{bibdiv}
  \begin{biblist}

\bib{MR584666}{article}{
   author={Alster, K.},
   author={Pol, R.},
   title={On function spaces of compact subspaces of $\Sigma $-products of
   the real line},
   journal={Fund. Math.},
   volume={107},
   date={1980},
   number={2},
   pages={135--143},
   issn={0016-2736},
   review={\MR{584666}},
   doi={10.4064/fm-107-2-135-143},
   url={https://eudml.org/doc/211111},
}

\bib{MR2823691}{article}{
   author={Barman, Doyel},
   author={Dow, Alan},
   author={Pichardo-Mendoza, Roberto},
   title={Complete separation in the random and Cohen models},
   journal={Topology Appl.},
   volume={158},
   date={2011},
   number={14},
   pages={1795--1801},
   issn={0166-8641},
   review={\MR{2823691}},
   doi={10.1016/j.topol.2011.06.014},
}

\bib{MR2768685}{article}{
   author={Blass, Andreas},
   title={Combinatorial cardinal characteristics of the continuum},
   book={
       title={Handbook of set theory. Vols.~1, 2, 3},
       editor={Foreman, Matthew},
       editor={Kanamori, Akihiro},
       publisher={Springer, Dordrecht},
       isbn={978-1-4020-4843-2},
       date={2010},   
   },
   pages={395--489},
   review={\MR{2768685}},
   doi={10.1007/978-1-4020-5764-9\_7},
}

\bib{MR776622}{article}{
   author={van Douwen, Eric K.},
   title={The integers and topology},
   book={
         title={Handbook of set-theoretic topology},
         editor={Kunen, Kenneth},
         editor={Vaughan, Jerry E.},
         publisher={North-Holland, Amsterdam},
         date={1984},
   },
   pages={111--167},
   review={\MR{776622}},
}

		
\bib{dhvmv2022}{article}{
author={Dow, Alan},
author={Hart, Klaas Pieter},
author={van Mill, Jan},
author={Vermeer, Hans},
title={Some realcompact spaces},
journal={Topology Proceedings}, 
volume={62},
date={2023}, 
pages={205--216},
note={E-published on August 25, 2023},
review={\MR{4634391}},
}

\bib{MR1039321}{book}{
   author={Engelking, Ryszard},
   title={General topology},
   series={Sigma Series in Pure Mathematics},
   volume={6},
   edition={2},
   note={Translated from the Polish by the author},
   publisher={Heldermann Verlag, Berlin},
   date={1989},
   pages={viii+529},
   isbn={3-88538-006-4},
   review={\MR{1039321}},
}

\bib{MR3003330}{article}{
   author={Fox, Keith M.},
   title={Generalizing witnesses to the non-normality of
   $\mathbb{N}^{\omega_1}$},
   journal={Topology Appl.},
   volume={160},
   date={2013},
   number={2},
   pages={337--340},
   issn={0166-8641},
   review={\MR{3003330}},
   doi={10.1016/j.topol.2012.11.011},
}

\bib{MR2516236}{article}{
   author={Hart, Joan E.},
   author={Kunen, Kenneth},
   title={Aronszajn compacta},
   journal={Topology Proc.},
   volume={35},
   date={2010},
   pages={107--125},
   issn={0146-4124},
   review={\MR{2516236}},
}

\bib{hvm2022}{article}{
author={Hart, Klaas Pieter},
author={van Mill, Jan},
title={Problems on $\beta\N$},
date={22 December 2022},
doi={10.48550/arXiv.2205.11204}
}

\bib{MR4138431}{article}{
   author={Hirata, Yasushi},
   author={Yajima, Yukinobu},
   title={$C^*$-, $C$- and $P$-embedded subsets in products and the
   undecidability of a certain property on $\mathbb{N}^{\omega_1}$},
   journal={Topology Appl.},
   volume={283},
   date={2020},
   pages={107350, 16},
   issn={0166-8641},
   review={\MR{4138431}},
   doi={10.1016/j.topol.2020.107350},
}

\bib{zbMATH03161747}{article}{
 author = {Kat{\v{e}}tov, Miroslav},
 title = {Remarks on characters and pseudocharacters},
 journal = {Commentat. Math. Univ. Carol.},
 ISSN = {0010-2628},
 Volume = {1},
 Number = {1},
 Pages = {20--25},
 Year = {1960},
 Language = {English},
}	

\bib{MR597342}{book}{
   author={Kunen, Kenneth},
   title={Set theory. An introduction to independence proofs},
   series={Studies in Logic and the Foundations of Mathematics},
   volume={102},
   publisher={North-Holland Publishing Co., Amsterdam-New York},
   date={1980},
   pages={xvi+313},
   isbn={0-444-85401-0},
   review={\MR{597342}},
}

\bib{MR3239211}{article}{
   author={Pol, El\.{z}bieta},
   author={Pol, Roman},
   title={Note on countable closed discrete sets in products of natural
   numbers},
   journal={Topology Appl.},
   volume={175},
   date={2014},
   pages={65--71},
   issn={0166-8641},
   review={\MR{3239211}},
   doi={10.1016/j.topol.2014.07.005},
}

\bib{MR776625}{article}{
   author={Todor\v{c}evi\'{c}, Stevo},
   title={Trees and linearly ordered sets},
   book={
      title={Handbook of set-theoretic topology},
      editor={Kunen, Kenneth},
      editor={Vaughan, Jerry E.},
      publisher={North-Holland, Amsterdam},
      date={1984},
   },
   pages={235--293},
   review={\MR{776625}},
}
	
\bib{MR1355064}{article}{
   author={Todor\v{c}evi\'{c}, Stevo},
   title={The functor $\sigma^2X$},
   journal={Studia Math.},
   volume={116},
   date={1995},
   number={1},
   pages={49--57},
   issn={0039-3223},
   review={\MR{1355064}},
   doi={10.4064/sm-116-1-49-57},
}
	
  \end{biblist}
\end{bibdiv}
\end{document}